\theoremstyle{plane}
\newtheorem{thm}{Theorem}[section]
\newtheorem{lemma}[thm]{Lemma}
\newtheorem{prop}[thm]{Proposition}
\newtheorem{cor}[thm]{Corollary}
\theoremstyle{definition}
\newtheorem{defn}[thm]{Definition}
\newtheorem*{rmk}{Remark}
\newtheorem*{ack}{Acknowledgement}
\newtheorem{example}[thm]{Example}
\newcommand{\A}{\mathbb{A}}
\newcommand{\F}{\mathbb{F}}
\newcommand{\Z}{\mathbb{Z}}
\newcommand{\N}{\mathbb{N}}
\newcommand{\G}{\mathbb{G}}
\newcommand{\C}{\mathbb{C}}
\newcommand{\bT}{\mathbb{T}}
\let\k\relax
\newcommand{\k}{\mathbf{k}}
\newcommand{\Y}{\mathcal{Y}}
\newcommand{\B}{\mathcal{B}}
\let\H\relax
\newcommand{\H}{\mathcal{H}}
\newcommand{\cP}{\mathcal{P}}
\newcommand{\cF}{\mathcal{F}}
\newcommand{\cC}{\mathcal{C}}
\newcommand{\g}{\mathfrak{g}}
\newcommand{\fF}{\mathfrak{F}}
\newcommand{\fX}{\mathfrak{X}}
\let\l\relax
\newcommand{\l}{l}
\newcommand{\act}[1]{{}^{#1}}
\newcommand{\br}[1]{\langle{#1}\rangle}
\newcommand{\qlbar}{{\overline{\mathbb{Q}_\ell}}}
\newcommand{\conj}[1]{\underline{#1}}
\newcommand{\charac}[1]{\widehat{#1}}
\DeclareMathOperator{\tr}{\textup{tr}}
\DeclareMathOperator{\ad}{\textup{ad}}
\DeclareMathOperator{\ind}{\textup{ind}}
\DeclareMathOperator{\Spec}{\textup{Spec}}
\DeclareMathOperator{\End}{\textup{End}}
\DeclareMathOperator{\Hom}{\textup{Hom}}
\DeclareMathOperator{\ch}{\textup{char}}
\DeclareMathOperator{\Irr}{\textup{Irr}}
\title{Euler characteristic of analogues of a Deligne-Lusztig variety for $GL_n$}
\author{Dongkwan Kim}
\address{Department of Mathematics\\
  Massachusetts Institute of Technology\\
  Cambridge, MA 02139-4307\\
  U.S.A.}
\email{sylvaner@math.mit.edu}
\date{\today}							
\begin{document}
\begin{abstract} In this paper we give a combinatorial formula to calculate the Euler characteristic of an analogue of a Deligne-Lusztig variety if we replace Frobenius morphism with conjugation by an element for $GL_n$. The main theorem states that it only depends on the unipotent part of the Jordan decomposition of an element and the conjugacy class in the Weyl group. Also it generalizes the formula of the Euler characteristic of a Springer fiber for type A.
\end{abstract}

\maketitle

\renewcommand\contentsname{}
\tableofcontents
\section{Introduction}
Suppose an algebraic reductive group $G$ over an algebraically closed field $\k$ is given. If $\k$ is an algebraic closure of a finite field with some fixed Frobenius morphism, then \cite{dl} defined $X(w)$ for any element $w$ in the Weyl group of $G$ which is now called a Deligne-Lusztig variety. Likewise, if we replace the Frobenius morphism with conjugation by $g\in G$, we obtain its analogue, denoted by $\Y_{w,g}$ in this paper. Note that it can be defined over any field $\k$.

The variety $\Y_{w,g}$ is studied by e.g. \cite{kawanaka},\cite{lu:reflection}, \cite{lu:char1}, \cite{lu:weyltounip},\cite{lu:homogeneity}, \cite{lu:distinguished},\cite{dk:homology}, etc. Also when $w$ is the identity, it coincides with the definition of a Springer fiber of $g \in G$. In this paper we describe the $\ell$-adic Euler characteristic (or Euler-Poincar\'e characteristic) of $\Y_{w,g}$ when $G=GL_n(\k)$. Note that it has a following meaning: we let $\pi_w : Y_w \rightarrow G$ as in the introduction of \cite{lu:char1}. Then the $\ell$-adic Euler characteristic of $\Y_{w,g}$ is the same as that of $(\pi_w)_! \qlbar_{Y_w}$ at $g\in G$, where $\qlbar_{Y_w}$ is a constant $\qlbar$-line bundle on $Y_w$.

The main result of this paper asserts that this Euler characteristic, denoted by $\chi(\Y_{w,g})$, is easy to calculate for $G=GL_n(\k)$. Indeed, it only depends on the unipotent part of $g$ in its Jordan decomposition and the conjugacy class of $w$ in the Weyl group of $G$. Also there is a simple combinatorial formula to calculate such $\chi(\Y_{w,g})$. This generalizes the well-known formula of the Euler characteristic of a Springer fiber for type A. We expect that similar properties hold for reductive groups of other types.

\begin{ack} The author thanks George Lusztig for stimulating discussions and thoughtful comments.
\end{ack}

\section{Some notations and definitions}
Here we fix some notations and definitions which are used throughout this paper. For a group $H$ and subgroup $K \subset H$, we let $N_H(K)$ be the normalizer of $K$ in $H$. 
For any element $h \in H$, we denote the centralizer of $h$ in $H$ by $C_H(h)$. We define $\conj{H}$ to be the set of conjugacy classes in $H$. For any $\cC \in \conj{H}$, we set $C_H(\cC)$ to be $C_H(h)$ for any $h \in \cC$, which is well-defined up to conjugacy. If $H$ is a topological group, then we set $H^0$ to be the identity component of $H$ which is a topological subgroup of $H$. 

For a finite dimensional $\C$-algebra $A$, we denote by $\Irr(A)$ the set of irreducible representations of $A$ over $\C$. If $H$ is a finite group, we often write $\Irr(H)$ instead of $\Irr(\C[H])$. We set $\charac{H}$ to be the set of all virtual characters of $H$ over $\C$, which is equivalent to the $\Z$-span of $\Irr(H)$. For $h \in H$ and $E \in \charac{H}$, we denote by $\tr(h, E)$ the character value of $E$ at $h$. For $\cC \in \conj{H}$ define $\tr(\cC, E)$ to be $\tr(h, H)$ at any $h \in \cC$.

Let $\k$ be an algebraically closed field of characteristic $p$ (which can be zero) and $G=GL_n(\k)$. For a variety $X$ over $\k$ and a prime $\ell \neq p$, $\chi(X)$ denotes the ($\ell$-adic) Euler characteristic of $X$ defined by the following formula.
$$\chi(X) \colonequals \sum_{i\in \Z} (-1)^i \dim_\qlbar H^i_c(X, \qlbar)$$
Recall that $\sum_{i\in \Z} (-1)^i \dim_\qlbar H^i_c(X, \qlbar) = \sum_{i\in \Z} (-1)^i \dim_\qlbar H^i(X, \qlbar)$ by \cite{laumon:euler}. Also we denote the constant $\qlbar$-line bundle on $X$ by $\qlbar_X$.

We fix a standard basis $e_1, \cdots, e_n \in \k^n$ and consider $G$ as the set of invertible $n\times n$ matrices with respect to this fixed basis. Let $B_0\subset G$ be the subgroup of $G$ consisting of all upper triangular invertible matrices which is a Borel subgroup of $G$. Also let $T_0 \subset B_0$ be the subgroup of $B_0$ consisting of all diagonal invertible matrices which is a maximal torus of $G$. Thus $B_0$ stabilizes the standard flag $[0 \subset \br{e_1} \subset \br{e_1, e_2} \subset \cdots \subset \br{e_1, \cdots, e_n} = \k^n]$ and $T_0$ stabilizes each $\br{e_i}$ for $1 \leq i \leq n$.

For any $g \in G$, we let $g=g_sg_u$ be the Jordan decomposition of $g$ such that $g_s \in G$ is semisimple, $g_u$ is unipotent, and $g_sg_u=g_ug_s$. We define the \emph{Jordan type of $g$} to be the partition of $n$ which corresponds to the sizes of Jordan blocks of $g$.

We identify $N_G(T_0)/T_0$ with the Weyl group of $G$ denoted by $W$. Let $S \subset W$ be the set of simple reflections which corresponds to the choice of the Borel subgroup $B_0 \supset T_0$. More specifically, for $1 \leq i \leq n-1$ we set $s_i \in S$ to be the image of the linear map $\k^n \rightarrow \k^n$ in $N_G(T_0)/T_0$ which fixes each $e_j$ for $j\neq i, i+1$ and swaps $e_i$ and $e_{i+1}$. Furthermore, we identify $W = S_n$ where each $s_i$ corresponds to the transposition $(i, i+1)$. Note that $(W, S)$ is a Coxeter group. For $w\in W$, we let $\l(w)$ be the length of $w$ which is the length of any reduced expression with respect to $S$.

For a partition $\lambda =(\lambda_1, \cdots, \lambda_r) \vdash n$, we set $S_\lambda \colonequals S_{\lambda_1} \times \cdots \times S_{\lambda_r}$. We often regard $S_\lambda$ as a subgroup of $S_n$ in a natural way which is well-defined up to conjugacy. Also we let $\lambda! \colonequals |S_\lambda| = \lambda_1!\cdots\lambda_r!$. We define the \emph{cycle type of $w \in S_n$} to be the partition of $n$ corresponding to the sizes of disjoint cycles of $w$. Likewise, we define the cycle type of $\cC \in \conj{W}$ to be that of $w$ for any $w\in \cC$.

For a set $X$ and an endomorphism $\sigma : X \rightarrow X$, we denote by $X^\sigma$ the set of fixed elements in $X$ by $\sigma$. Likewise, if a group $H$ acts on $X$ then we denote by $X^H$ the set of fixed elements in $X$ by all elements in $H$. If $Y$ is a finite set, we denote by $|Y|$ the number of elements in $Y$. Thus in particular, if $\sigma : X \rightarrow X$ has the finite number of fixed elements in $X$, then $|X^\sigma|$ denotes the number of such elements. Also for any $x \in X$, we write $\act{\sigma}x$ for $\sigma(x)$. If $G$ is a group, then we often regard $g$ as an automorphism $\ad(g) :G \rightarrow G$, thus for any $x\in G$ or $X \subset G$ we write $\act{g}x$ for $gxg^{-1}$ and $\act{g}X$ for $gXg^{-1}$. 

Let $\B$ be the flag variety of $G$ which parametrizes all the complete flags in $\k^n$ or equivalently all the Borel subgroups of $G$. We usually identify $\B$ with $G/B_0$. For $B, B' \in \B$, we write $B\sim_w B'$ for some $w \in W$ if there exists $g \in G$ such that $\act{g}B =B_0$ and $\act{g}B' = \act{w}B_0$. For $w \in W$ and $g\in G$, we define the following variety.
$$\Y_{w,g} \colonequals \{ B \in \B \mid B\sim_w \act{g}B\}$$
Note that if we replace $\ad(g)$ by the Frobenius morphism, then this is the Deligne-Lusztig variety $X(w)$ corresponding to $w \in W$ defined by \cite[Definition 1.4]{dl}. The main purpose of this paper is to calculate $\chi(\Y_{w,g})$ for any $w \in W$ and $g \in G$.

\section{Main theorem}
\begin{defn}\label{def:num} For $\lambda, \rho \vdash n$, we define $X_{\rho}^\lambda \colonequals \ind_{S_\lambda}^{S_n} id_{S_\lambda} (w)$ where $w \in S_n$ is any element of cycle type $\rho$.
\end{defn}
It is equivalent to $\br{p_\rho, h_\lambda}$ where $p_\rho$ is the power symmetric function corresponding to $\rho$, $h_\lambda$ is the homogeneous symmetric function corresponding to $\lambda$, and $\br{\ ,\ }$ is the usual scalar product defined on the ring of symmetric functions. Equivalently, if we expand $p_\rho$ in terms of monomial symmetric functions, $X_\rho^\lambda$ corresponds to the coefficient of the monomial corresponding to $\lambda$. For more information we refer readers to \cite[Chapter 7]{stanley}. In particular, $X_\rho^\lambda \in \N$.

Here we present the main theorem of this paper.
\begin{thm}\label{thm:main}
 For $g\in G$, let $g=g_sg_u$ be the Jordan decomposition of $g$ and $g_u \in G$ be of Jordan type $\lambda=(\lambda_1, \cdots, \lambda_r) \vdash n$. Then for any $w\in W$ which is of cycle type $\rho=(\rho_1, \cdots, \rho_s) \vdash n$, we have $\chi(\Y_{w, g}) = X_{\rho}^\lambda$.
\end{thm}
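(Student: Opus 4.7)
The plan is a two-stage torus-fixed-point reduction, followed by a combinatorial identification with the character $X_\rho^\lambda$.

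\emph{Stage 1 (reduction to $g$ unipotent).} Take $T := Z(C_G(g_s))^0$, which for $GL_n$ is the torus acting as a scalar on each eigenspace of $g_s$. Since $g_s$ is central in $C_G(g_s)$ and $g_u \in C_G(g_s)$, the torus $T$ centralizes $g$; hence $T$ acts on $\Y_{w,g}$ by conjugation. Connectedness of $T$ yields $\chi(\Y_{w,g}) = \chi(\Y_{w,g}^T)$. For any $B \in \B^T$ one has $g_s \in T \subseteq B$, hence $gBg^{-1} = g_u B g_u^{-1}$, so $\Y_{w,g}^T = \Y_{w,g_u}^T$. A second application of the torus-fixed-point principle to $g_u$ gives $\chi(\Y_{w,g}) = \chi(\Y_{w,g_u})$, reducing to the unipotent case.

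\emph{Stage 2 (Jordan-block torus).} Let $u := g_u$ have Jordan type $\lambda = (\lambda_1,\ldots,\lambda_r)$. Fix a decomposition $\k^n = J_1 \oplus \cdots \oplus J_r$ with $\dim J_i = \lambda_i$ and $u|_{J_i}$ a single Jordan block, and let $T' \cong \mathbb{G}_m^r$ act as scalars on each $J_i$. Then $T' \subseteq C_G(u)$, so $\chi(\Y_{w,u}) = \chi(\Y_{w,u}^{T'})$. The $T'$-fixed locus decomposes as $\B^{T'} = \bigsqcup_\sigma \prod_i \B(J_i)$, indexed by shuffles $\sigma$ — equivalently, cosets in $S_n/S_\lambda$. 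The key technical step is to analyze the relative position in $G$ of two $T'$-fixed Borels: it shows $\Y_{w,u}^{T'} \cap \B^{T'}_\sigma$ is nonempty exactly when each cycle of $w$ lies in a single $\sigma$-block, and in that case is isomorphic to $\prod_i \Y_{w_i,\, u|_{J_i}}^{GL(J_i)}$, where $w_i$ is the restriction of $w$ to the $i$-th block and $u|_{J_i}$ is regular unipotent in $GL_{\lambda_i}$.

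\emph{Base case and conclusion.} For the base case — $\chi(\Y_{w, u_{\mathrm{reg}}}) = 1$ for any $w$ when $u_{\mathrm{reg}}$ is regular unipotent in $GL_m$ — I would argue by induction on $\l(w)$, producing an affine paving of $\Y_{w, u_{\mathrm{reg}}}$ by a single cell of dimension $\l(w)$ in the spirit of the Deligne--Lusztig variety arguments. Granted this, each factor in the Stage 2 decomposition contributes $1$, and so
\[
\chi(\Y_{w,g}) \;=\; \bigl|\{\sigma \in S_n/S_\lambda : w \text{ preserves the partition induced by }\sigma\}\bigr| \;=\; \tr\!\bigl(w,\, \ind_{S_\lambda}^{S_n} \mathbf{1}\bigr) \;=\; X_\rho^\lambda.
\]

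\emph{Main obstacle.} The hardest step is the Bruhat-theoretic analysis in Stage 2: pinning down when $\Y_{w,u}^{T'} \cap \B^{T'}_\sigma$ is nonempty and identifying it with the stated product over Levi factors. This requires careful bookkeeping with Weyl-group double cosets $W_L \backslash W / W_L$ for the Levi $L = C_G(T') = \prod_i GL(J_i)$ and with how relative position in $G$ between two $T'$-stable Borels restricts to relative position inside each $GL(J_i)$. The base case, while substantive, is essentially a familiar length-induction on Bruhat cells.
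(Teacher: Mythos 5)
Your approach is genuinely different from the paper's. The paper gives three separate arguments: over $\overline{\F_q}$ it computes $|(\Y_{w,g})^{F^m}|$ via the Iwahori--Hecke algebra module $\fF$, the Deligne--Lusztig character formula, and Green polynomials, then specializes $q \to 1$; it lifts to general characteristic $p>0$ by a spreading-out/constructibility argument; and in characteristic $0$ it uses a $\G_m$ that does \emph{not} centralize $g$ (it scales the nilpotent part, $\act{\gamma(t)}g = g_s + tg_n$), combined with a Grassmannian fibration and Bialynicki--Birula, reducing inductively to the Coxeter case via Lemma~\ref{lem:cox}. You instead propose a uniform geometric argument with two \emph{centralizing} tori. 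Your Stage~1 ($T = Z(C_G(g_s))^0$, noting $g_s \in T \subset B$ for any $T$-fixed $B$, hence $\Y_{w,g}^T = \Y_{w,g_u}^T$) is clean and correct, and is a nicer reduction to the unipotent case than anything in the paper. Your Stage~2 is also sound: for a $T'$-fixed flag the relative position with its $u$-transform is automatically block-diagonal with respect to the shuffle $\sigma$, which gives exactly the nonemptiness criterion and the product decomposition you describe; and the resulting count of shuffles fixed by $w$ is indeed $X_\rho^\lambda$. This is a real simplification that the paper does not use, and it also gives an induction on $n$ reducing everything to a single Jordan block.

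The gap is the base case, and it is not a minor one. You need $\chi(\Y_{w,\,u_{\mathrm{reg}}}) = 1$ for \emph{every} $w \in S_m$, not just Coxeter elements. This is exactly the $\lambda = (m)$ case of the theorem, so it carries essentially the full difficulty of the statement for the regular unipotent class. Your proposed proof --- induction on $\l(w)$, producing ``an affine paving by a single cell of dimension $\l(w)$" --- does not work as stated: $\Y_{w,u_{\mathrm{reg}}}$ is known to be an affine space of dimension $\l(w)$ when $w$ is Coxeter (this is \cite[0.3]{lu:homogeneity}, used in the paper's Lemma~\ref{lem:cox}), but there is no reason for it to be a single affine cell in general. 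Indeed, a Deligne--Lusztig length-reduction such as Lemma~\ref{lem:conj}(b) stratifies $\Y_{w,g}$ into a $\G_m$-fibration over $\Y_{sw,g}$ plus an $\A^1$-fibration over $\Y_{sws,g}$; this is perfectly good for computing $\chi$ (the $\G_m$-piece contributes $0$), but $\G_m$-bundles are not affine pavings, so the statement you would prove inductively should be about Euler characteristics, not cell decompositions. Even after reducing to $w$ of minimal length in its conjugacy class, a minimal $w$ need not be Coxeter when its cycle type is not $(m)$, and you still need an argument --- the paper handles this with a separate Grassmannian fibration in Section~6. So the base case needs a genuine proof (either by importing Lemma~\ref{lem:conj} and the Grassmannian argument, or by quoting the $q \to 1$ specialization of Green polynomials for $\lambda = (m)$), and until that is supplied the overall argument is incomplete.

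One more small point worth flagging: both of your stages invoke $\chi(X) = \chi(X^T)$ for a connected torus action, which you should state with a reference valid in all characteristics (for $\ell$-adic Euler characteristic this is standard, but the paper is careful to note that its Lemma~\ref{lem:fib} fails in characteristic $p$, so it is worth making the distinction explicit).
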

In particular, $\chi(\Y_{w, g})$ only depends on $g_u$ and the conjugacy class of $w$. If $w = id$ so that it is of cycle type $\rho = (1, \cdots, 1)$, we have the following well-known result. (For more information one may refer to \cite{lu:inductionthm}, \cite{fresse}, etc.)
\begin{cor} For $g\in G$ such that its unipotent part $g_u \in G$ is of Jordan type $\lambda=(\lambda_1, \cdots, \lambda_r)\vdash n$, the Euler characteristic of the Springer fiber of $g$ is $\frac{n!}{\lambda_1!\cdots\lambda_r!}$.
\end{cor}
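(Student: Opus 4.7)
The plan is to extract this directly from Theorem \ref{thm:main} by specializing $w = id$. First I would unwind the definition of $\Y_{w, g}$ in this case. The relation $B \sim_{id} B'$ holds precisely when there exists $g' \in G$ with $\act{g'}B = B_0 = \act{g'}B'$, i.e.\ precisely when $B = B'$. Hence
\[
\Y_{id, g} = \{B \in \B \mid \act{g}B = B\} = \B^{\ad(g)},
\]
which is by definition the Springer fiber of $g$ acting on $\B$.

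Next, since $id \in W = S_n$ has cycle type $\rho = (1, \cdots, 1) = (1^n)$, Theorem \ref{thm:main} immediately gives $\chi(\Y_{id, g}) = X_{(1^n)}^\lambda$. The remaining task is to evaluate this combinatorial integer. By Definition \ref{def:num} it equals $\tr(id, \ind_{S_\lambda}^{S_n} id_{S_\lambda})$, which is the dimension of the induced representation, and therefore the coset index $[S_n : S_\lambda] = n!/(\lambda_1! \cdots \lambda_r!)$. Equivalently, via the symmetric-function interpretation recorded just after Definition \ref{def:num}, one expands $p_{(1^n)} = p_1^n = (x_1 + x_2 + \cdots)^n$ by the multinomial theorem and reads off the coefficient of a monomial of shape $\lambda$, again obtaining the multinomial coefficient $n!/(\lambda_1! \cdots \lambda_r!)$.

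Because all of the substantive content has been absorbed into Theorem \ref{thm:main}, there is no real obstacle in this corollary beyond bookkeeping. The only point warranting brief care is the identification $\Y_{id, g} = \B^{\ad(g)}$ with the honest Springer fiber, which is immediate from the $w = id$ specialization of the relation $\sim_w$.
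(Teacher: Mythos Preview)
Your proposal is correct and matches the paper's approach exactly: the paper presents this corollary as an immediate specialization of Theorem~\ref{thm:main} to $w = id$ (cycle type $(1,\ldots,1)$), with no separate proof given. Your added verification that $\Y_{id,g}$ is the Springer fiber and your computation of $X_{(1^n)}^\lambda = [S_n : S_\lambda]$ simply make explicit the bookkeeping the paper leaves implicit.
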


\begin{rmk} Suppose $\k = \overline{\F_q}$ where $q$ is a power of some prime $p> 0$ and $F$ is the (split) Frobenius morphism on $G$ with respect to $\F_q$. Then recall that for a Deligne-Lusztig variety $X(w)$, by \cite[Theorem 7.1]{dl} we have
\[\chi(X(w)) = (-1)^{\l(w)}|G^F|_{p'}|T_w^F|^{-1}\]
where $T_w \subset G$ is the rational torus of type $w\in W$ and $|G^F|_{p'}$ is the largest divisor of $|G^F|$ prime to $p$. (The type of a rational maximal torus will be defined in the next section.) If we regard the expression of $\chi(X(w))$ as a polynomial of $q$, then as $q \rightarrow 1$ we have $X(w) \rightarrow n!\cdot\delta_{w, id}$. By Theorem \ref{thm:main}, this is the same as $\chi(\Y_{w, g})$ when $g \in G$ is semisimple. Indeed, there are several phenomena which assert that ``the Frobenius morphism converges to a (regular) semisimple element as $q\rightarrow 1$", cf. \cite{lu:certain}, \cite[Theorem 4.12]{dk:homology}, etc.
\end{rmk}

\section{Finite field case: Combinatorial method}
In this section we prove Theorem \ref{thm:main} in case of $\k = \overline{\F_q}$ where $q$ is a power of some prime $p>0$. In this case our proof is based on combinatorics. For $\lambda=(\lambda_1, \cdots, \lambda_r), \rho=(\rho_1, \cdots, \rho_s) \vdash n$, we define $P(\rho, \lambda)$ to be the set of functions
$$\zeta:\{1, \cdots, s\} \rightarrow \{1, \cdots, r\}$$
such that for each $1\leq i \leq r$ we have $\lambda_i = \sum_{\zeta(j) = i} \rho_j$. Similarly, we define $[P(\rho, \lambda)]$ to be the set of functions
$$\xi:\{\rho_1, \cdots, \rho_s\} \rightarrow \{\lambda_1, \cdots, \lambda_r\}$$
such that for each $1\leq i \leq r$ we have $\lambda_i = \sum_{\xi(\rho_j) = \lambda_i} \rho_j$. Here we regard $\{\rho_1, \cdots, \rho_s\}$ and $\{\lambda_1, \cdots, \lambda_r\}$ as unordered multisets. Then there is a canonical map
$$[-]: P(\rho, \lambda)\rightarrow [P(\rho, \lambda)]$$
given by ``forgetting the order of parts of the same size." Then it is easy to see that the relation $\zeta\sim \zeta' \Leftrightarrow [\zeta]=[\zeta']$ is an equivalence relation on $P(\rho, \lambda)$, and thus $[P(\rho, \lambda)]$ is identified with the set of equvalence classes of $P(\rho, \lambda)$. 
\begin{example} \label{ex1} Suppose $\lambda = (7, 3), \rho=(3,2,2,2,1) \vdash 10$. Then $P(\rho, \lambda)$ consists of the following functions.
\begin{align*}
\zeta_1: 1,2,3\mapsto 1 \qquad 4, 5 \mapsto 2
\\\zeta_2: 1,2,4\mapsto 1 \qquad 3, 5 \mapsto 2
\\\zeta_3: 1,3,4\mapsto 1 \qquad 2, 5 \mapsto 2
\\\zeta_4: 2,3,4,5\mapsto 1 \qquad 1 \mapsto 2
\end{align*}
Among them $\zeta_1 \sim \zeta_2 \sim \zeta_3$ which are not equivalent to $\zeta_4$. We have
\begin{align*} 
[\zeta_1]=[\zeta_2]=[\zeta_3]&: 3, 2, 2, \mapsto 7, \qquad 2, 1 \mapsto 3
\\ [\zeta_4]&: 2, 2, 2,1 \mapsto 7, \qquad 3 \mapsto 3
\end{align*}
Note that $|P(\rho, \lambda)| = 4 = X_\rho^\lambda$ in this case. This is not a coincidence; We have the following lemma.
\end{example}
\begin{lemma} $X_\rho^\lambda = |P(\rho, \lambda)|$.
\end{lemma}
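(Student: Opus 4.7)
The plan is to unpack $X_\rho^\lambda$ as a fixed-point count for an induced representation, translate this into a count of $w$-stable ordered set partitions of $\{1, \ldots, n\}$, and read off $|P(\rho, \lambda)|$ directly.

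First I will invoke the classical character-theoretic identity: for any finite group $H$ and subgroup $K \leq H$, the value $\operatorname{ind}_K^H \mathrm{id}_K(h)$ counts the cosets in $H/K$ fixed by left multiplication by $h$. Applied with $H = S_n$, $K = S_\lambda$, and $h = w$ of cycle type $\rho$, this gives
\[ X_\rho^\lambda = \bigl|\{ xS_\lambda \in S_n/S_\lambda : w x S_\lambda = x S_\lambda \}\bigr|. \]

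Next I will use the standard identification of $S_n/S_\lambda$ with the set of ordered set partitions $(A_1, \ldots, A_r)$ of $\{1, \ldots, n\}$ with $|A_i| = \lambda_i$, sending $gS_\lambda$ to the tuple whose $i$-th block is the image under $g$ of the index set of the $i$-th factor of $S_\lambda$. The left $S_n$-action transports to the obvious one, and a coset is $w$-fixed precisely when each $A_i$ is $w$-stable, i.e., each $A_i$ is a union of cycles of $w$. Writing the cycles of $w$ as $c_1, \ldots, c_s$ of sizes $\rho_1, \ldots, \rho_s$, specifying such a $w$-stable ordered partition is the same as specifying a function $\zeta : \{1, \ldots, s\} \to \{1, \ldots, r\}$ with $\sum_{\zeta(j) = i} \rho_j = \lambda_i$ for each $i$, the bijection being $\zeta \mapsto \bigl(\bigsqcup_{\zeta(j) = i} c_j\bigr)_{i=1}^{r}$. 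This is exactly the defining condition of $P(\rho, \lambda)$, so $X_\rho^\lambda = |P(\rho, \lambda)|$.

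I do not anticipate any genuine obstacle: both the fixed-point formula for the induced trivial character and the coset-to-ordered-partition dictionary are classical. The only point requiring a moment of care is keeping track of labels rather than equivalence classes, since $P(\rho, \lambda)$ indexes cycles by $\{1, \ldots, s\}$ and bins by $\{1, \ldots, r\}$ even when some $\rho_j$'s or $\lambda_i$'s coincide; this matches Example \ref{ex1}, where one really has $|P(\rho, \lambda)| = 4$ rather than $|[P(\rho, \lambda)]| = 2$. As a sanity check on the bijection, I will also verify that the alternative formula $X_\rho^\lambda = \langle p_\rho, h_\lambda \rangle$ gives the same count, since $p_\rho$ expanded in the monomial basis records exactly the ways to distribute the parts of $\rho$ into ordered bins of total sizes $\lambda_i$.
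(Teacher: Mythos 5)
Your proof is correct and takes essentially the same route as the paper: both start from the observation that $X_\rho^\lambda = \ind_{S_\lambda}^{S_n}\mathrm{id}_{S_\lambda}(w)$ counts cosets $vS_\lambda$ with $v^{-1}wv \in S_\lambda$ (equivalently, cosets fixed by left multiplication by $w$), and both then biject these with $P(\rho,\lambda)$ by tracking how the cycles of $w$ distribute among the blocks of $S_\lambda$. You simply make explicit, via the ordered-set-partition model of $S_n/S_\lambda$, the correspondence that the paper dismisses as "easy to show."
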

\begin{proof} We fix a subgroup $S_\lambda \subset S_n$ and $w_\rho \in S_n$ of cycle type $\rho$. By definition $X_\rho^\lambda$ is the number of elements $vS_\lambda \in S_n/S_\lambda$ such that $\act{v^{-1}}w_\rho \in S_\lambda$. But it is easy to show that there is an one-to-one correspondence between such $vS_\lambda$ and elements in $P(\rho, \lambda)$ by looking at the cycle decomposition of $\act{v^{-1}}w_\rho$ in $S_\lambda$.
\end{proof}
\begin{rmk} If we use another definition of $X_\rho^\lambda$ that it is the coefficient of the monomial corresponding to $\lambda$ in the expression of $p_\rho$, then it is a direct consequence of \cite[Proposition 7.7.1]{stanley}.
\end{rmk}

Let $F$ be the Frobenius morphism with respect to $\F_q$. It acts on $G$ by raising each matrix entry to the power $q$. We may assume that $g, g_s, g_u$ are of Jordan normal form, thus $g \in B_0^F$ and $g_s \in T_0^F$. Assume $g_u$ is of Jordan type $\lambda=(\lambda_1, \cdots, \lambda_r)\vdash n$ and $g_s$ is of Jordan type  $\lambda' = (\lambda'_1, \cdots, \lambda'_{r'}) \vdash n$. As each Jordan block of $g_u$ is contained in some Jordan block of $g_s$, there exists a corresponding canonical map 
\begin{equation} \label{eq:jordan}
\phi_g \in [P(\lambda, \lambda')]
\end{equation}
which corresponds to the information of such containment.

We define $L_s := C_G(g_s) = C_G(g_s)^0$, which is isomorphic to $(GL_{\lambda'_1}\times \cdots \times GL_{\lambda'_{r'}})(\k)$ and contains $g_u$ and $T_0$. Also we define $W_s = N_{L_s}(T_0)/T_0$ to be the Weyl group of $L_s$, which we identify with $S_{\lambda'}$. It is naturally a subgroup of $S_n$ by the inclusion map $N_{L_s}(T_0)/T_0 \hookrightarrow N_G(T_0)/T_0$. Note that $(W_s, S\cap W_s)$ is also a Coxeter group.

For any rational maximal torus $T \subset G$, we denote the Deligne-Lusztig induction defined in \cite{dl} by $R^G_T: \charac{T^F} \rightarrow \charac{G^F}$. We set $id_T\in \Irr(T^F)$ to be the trivial character of $T^F$ so that $R^G_T id_T \in \charac{G^F}$ is well-defined. Also for any $g \in G$ such that $\act{g}T_0=T$, the image of $g^{-1}\act{F}g$ in $N_G(T_0)/T_0$ is well-defined up to conjugacy. By abuse of notation, we define the type of $T$ to be one of the following.
\begin{enumerate}[\qquad(a)]
\item The image of $g^{-1}\act{F}g$ in $N_G(T_0)/T_0$, denoted by $w\in W$.
\item $\rho \vdash n$ where $\rho$ is the cycle type of $w$.
\item $\cC \in \conj{W}$ for $w\in \cC$.
\end{enumerate}
Conversely, for such $w, \rho, \cC$ we denote by $T_w, T_\rho,$ or $T_{\cC}$ a rational maximal torus of type $w, \rho,$ or $\cC$, respectively. It is well-defined up to conjugation by $G^F$.

We want to calculate $|(\Y_{w,g})^F|$ and show that it is a ``polynomial of $q$." This is related to $\chi(\Y_{w,g})$ by the following lemma.

\begin{lemma} \label{lem:char} Suppose a variety $X$ is defined over $\F_q$. If there exists a polynomial $\phi(x) \in \C[x]$ such that $|X^{F^m}| = \phi(q^m)$ for all but finitely many $m \in \Z_{>0}$, then $\chi(X) = \phi(1)$.
\end{lemma}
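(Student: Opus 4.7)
The plan is to deduce the lemma from the Grothendieck--Lefschetz trace formula together with the linear independence of distinct exponential sequences.

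First, for each $m \geq 1$ the Grothendieck--Lefschetz fixed point theorem gives
$$|X^{F^m}| = \sum_{i \in \Z} (-1)^i \tr(F^m, H^i_c(X, \qlbar)).$$
Let $\beta_1, \ldots, \beta_r \in \qlbar^\times$ be the distinct nonzero eigenvalues of $F$ on $\bigoplus_i H^i_c(X, \qlbar)$, and let $a_j \in \Z$ be the net signed multiplicity $\sum_i (-1)^i \dim_{\qlbar}(H^i_c(X,\qlbar)_{\beta_j})$, where the subscript denotes the generalized $\beta_j$-eigenspace of $F$. Then the trace formula rewrites as $|X^{F^m}| = \sum_{j=1}^r a_j \beta_j^m$ for every $m \geq 1$.

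Next, writing $\phi(x) = \sum_{k=0}^d c_k x^k$ with $c_k \in \C$, the hypothesis becomes
$$\sum_{j=1}^r a_j \beta_j^m \;=\; \sum_{k=0}^d c_k (q^k)^m$$
for infinitely many $m \in \Z_{>0}$. At this point I would invoke the classical fact that pairwise distinct nonzero elements $\gamma_1, \ldots, \gamma_s \in \qlbar$ yield linearly independent sequences $(\gamma_i^m)_{m \geq 1}$: any identity $\sum_i b_i \gamma_i^m = 0$ valid for $s$ distinct values of $m$ forces all $b_i = 0$ by a Vandermonde determinant in the $\gamma_i$. Applying this to the difference of the two expressions above after combining like terms, the $\beta_j$ with $a_j \neq 0$ are forced to lie in $\{1, q, \ldots, q^d\}$, and the net multiplicity of $q^k$ among the $\beta_j$ must equal $c_k$.

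Summing signed multiplicities then yields
$$\chi(X) = \sum_i (-1)^i \dim_{\qlbar} H^i_c(X, \qlbar) = \sum_{j=1}^r a_j = \sum_{k=0}^d c_k = \phi(1),$$
as desired. The only technical point is the Vandermonde step, but this is standard in characteristic zero and poses no real obstacle, so the lemma is essentially a direct corollary of Grothendieck--Lefschetz together with linear independence of characters.
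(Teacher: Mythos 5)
Your proof is correct and follows essentially the same route as the paper's: apply the Grothendieck--Lefschetz trace formula, collect the Frobenius eigenvalues into distinct values with signed multiplicities, and then use linear independence of the exponential sequences $(\gamma^m)_m$ to match them against the powers $q^k$ appearing in $\phi$. The paper condenses the matching step into ``it is easy to see that [the identity] is true for all $m \in \Z$'' and then evaluates at $m=0$, whereas you make that step explicit via the Vandermonde argument; this is a presentational difference only. (One tiny imprecision: for the Vandermonde step you should take $s$ \emph{consecutive} values of $m$, not arbitrary distinct ones, since the relevant $\gamma_i$ may be complex; but this is harmless here since the hypothesis holds for all sufficiently large $m$.)
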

\begin{proof} For $i \in \Z$, let $\alpha_{i,1}, \cdots, \alpha_{i, d_i}$ be eigenvalues of Frobenius on $H^i_c(X, \qlbar)$. Then by Lefschetz trace formula we have
$$|X^{F^m}| = \sum_{i \in \Z} (-1)^i \sum_{j=1}^{d_i} \alpha_{i, j}^m$$
for any $m \in \Z_{>0}$. Note that $\chi(X) = \sum_{i \in \Z} (-1)^i \sum_{j=1}^{d_i} \alpha_{i, j}^0 = \sum_{i \in \Z} (-1)^i d_i$. By canceling out $\alpha_{i,j}$ with the same value and different parity of $i$, we may assume that
$$|X^{F^m}| = \alpha_1^m + \cdots + \alpha_a^m - \beta_1^m - \cdots - \beta_b^m$$
for some algebraic integers $\alpha_1,\cdots, \alpha_a, \beta_1, \cdots, \beta_b$ such that $\alpha_i$ are different from $\beta_j$. Note that $\chi(X) = a-b$. By assumption, there exists $\phi(x) = c_d x^d +\cdots +c_1 x+c_0\in \C[x]$ such that
$$\phi(q^m)=c_d q^{md} +\cdots +c_1 q^m+c_0=\alpha_1^m + \cdots + \alpha_a^m - \beta_1^m - \cdots - \beta_b^m$$
for all but finitely many $m \in \Z_{>0}$. But then it is easy to see that it is true for all $m \in \Z$. Now the result follows by letting $m=0$.
\end{proof}

In order to calculate $|(\Y_{w,g})^F|$ we follow the argument in \cite[Proposition 2.1]{lu:reflection} or \cite[1.2]{lu:weyltounip}. We first consider the $G^F$-module $\fF = \Hom_\C(\B^F, \C)$ where the action is defined by $(g\cdot f)(B) \colonequals f(\act{g^{-1}}B)$ for $g\in G^F$ and $f \in \fF$. Then the Iwahori-Hecke algebra $\H_q \colonequals \End_{G^F}(\fF)$ over $\C$ is defined and it has a $\C$-basis $\bT_w \in \H_q$ labeled by elements in $W$ which acts on $\fF$ as follows.
$$\bT_w(f)(B) = \sum_{B' \in \B^F, B \sim_w B'} f(B')$$
As an algebra $\H_q$ is generated by $\bT_{s_i}$ for $s_i \in S$ with relations given by
\begin{enumerate}
\item $\bT_{s_i} \bT_{s_{i+1}} \bT_{s_i} = \bT_{s_{i+1}}\bT_{s_i}\bT_{s_{i+1}}$ for $1 \leq i \leq n-2,$
\item $\bT_{s_i} \bT_{s_{j}} = \bT_{s_{j}}\bT_{s_i}$ if $|i-j|>1$,
\item $\bT_{s_i}^2 = (q-1)\bT_{s_i}+q\bT_{id}$ for $1 \leq i \leq n-1$.
\end{enumerate}
Also for any $E \in \Irr(W)$, there exists $E_q \in \Irr(\H_q)$ with the following property. If we regard $q$ in $\H_q$ as an indeterminate, then $\H_q$ becomes $\C[W]$ as $q\rightarrow1$ with $\bT_w \rightarrow w$, and $\tr(\bT_w, E_q)$ becomes $\tr(w, E)$. Now we have the following decomposition
$$\fF = \bigoplus_{E \in \Irr(W)} E_q \times \fX_E$$
of $\H_q \times G^F$-modules. In case of $G=GL_n(\k)$, $\fX_E$ is given by the following formula. (e.g. \cite[Proposition 12.6]{lu:orangebook})
$$\fX_E = \frac{1}{|W|}\sum_{w \in W} \tr(w, E) R^G_{T_w} id_{T_w} = \sum_{\cC \in W} \frac{1}{|C_W(\cC)|} \tr(\cC, E) R^G_{T_\cC} id_{T_\cC} $$

%

For $B \in \B^F$, we let $1_{B}$ be the indicator function, i.e. $1_{B}(B') = \delta_{B, B'}$. Then we have
$$(g\bT_w 1_{B})(B') = \sum_{B'' \in \B^F, B' \sim_w B''} 1_{B}(\act{g^{-1}}B'')$$
which is 1 if $B' \sim_w \act{g}B$ and 0 otherwise. In other words, we have
$$g\bT_w1_{B}=\sum_{B' \in \B^F, B' \sim_{w} \act{g}B} 1_{B'}.$$
Thus it follows that $|(\Y_{w,g})^F| = \tr(g\bT_w, \fF).$ As $\fF = \bigoplus_{E \in \Irr(W)} E_q \times \fX_E$, we have
\begin{align}
|(\Y_{w,g})^F| &= \sum_{E \in \Irr(W)} \tr(\bT_w, E_q) \tr(g, \fX_E) \nonumber
\\&= \sum_{E \in \Irr(W)}\sum_{\cC \in \conj{W}} \frac{1}{|C_W(\cC)|} \tr(\bT_w, E_q)\tr(\cC, E)R^G_{T_{\cC}}id_{T_\cC}(g). \label{eq:poly1}
\end{align}

We use the character formula for Deligne-Lusztig characters. (One may refer to \cite{dl} or \cite{dm:book}.) For a rational maximal torus $T \subset G$ we have
\begin{align*}
R^G_{T} id_T (g) &= \frac{1}{|L_s^F|}\sum_{h \in G^F, \act{h}T \subset L_s}R_{\act{h}T}^{L_s} id_{\act{h}{T}}(g_u)
\\&=\frac{1}{|L_s^F|}\sum_{T'\subset L_s}|N_G(T)^F|R_{T'}^{L_s} id_{T'}(g_u)
\end{align*}
where $T' \subset L_s$ runs over all the rational maximal tori in $L_s$ which is conjugate to $T$ by $G^F$.
Therefore for any $\cC \in \conj{W}$ we have
\begin{align}
R^G_{T_\cC} id_{T_\cC} (g) &=\sum_{\tilde{\cC} \in \conj{W_s}, \tilde{\cC}\subset \cC} \frac{|N_G(T_{\cC})^F|}{|N_{L_s}(T_{\tilde{\cC}})^F|} R_{T_{\tilde{\cC}}}^{L_s} id_{T_{\tilde{\cC}}}(g_u) \nonumber
\\&=\sum_{\tilde{\cC} \in \conj{W_s}, \tilde{\cC}\subset \cC} \frac{|C_W(\cC)|}{|C_{W_s}(\tilde{\cC})|} R_{T_{\tilde{\cC}}}^{L_s} id_{T_{\tilde{\cC}}}(g_u) \label{eq:dlchar}
\end{align}
where $T_{\tilde{\cC}} \subset L_s$ is the rational maximal torus of $L_s$ of type $\tilde{\cC} \in W_s$. Here we use the fact that $g_s \in T_0^F$, thus $F$ acts trivially on both $W$ and $W_s$. Thus we have
$$|(\Y_{w,g})^F| = \sum_{E \in \Irr(W)}\sum_{\tilde{\cC} \in \conj{W_s}} \frac{1}{|C_{W_s}(\tilde{\cC})|} \tr(\bT_w, E_q)\tr(\tilde{\cC}, E|_{W_s})R_{T_{\tilde{\cC}}}^{L_s} id_{T_{\tilde{\cC}}}(g_u).
$$
Here we have an interesting by-product.
\begin{prop} If $g,g' \in G^F$ are of Jordan normal form and satisfy $g_u = g'_u$ and $C_G(g_s) = C_G(g'_s)$, then $|(\Y_{w,g})^F| = |(\Y_{w, g'})^F|$.
\end{prop}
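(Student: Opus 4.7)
The proposition is essentially a corollary of the explicit formula for $|(\Y_{w,g})^F|$ derived immediately above its statement, so my plan is simply to inspect that formula. The summation there is indexed by pairs $(E, \tilde\cC) \in \Irr(W) \times \conj{W_s}$, and each summand is a product of four factors: the character values $\tr(\bT_w, E_q)$ and $\tr(\tilde\cC, E|_{W_s})$; the normalizing factor $|C_{W_s}(\tilde\cC)|^{-1}$; and the Deligne-Lusztig character value $R^{L_s}_{T_{\tilde\cC}}\, id_{T_{\tilde\cC}}(g_u)$. The first three factors depend on $g$ only through the subgroup $W_s \subset W$, which in turn depends only on $L_s = C_G(g_s)$; the last factor depends on $g$ only through the pair $(L_s, g_u)$.

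Under the hypotheses $g_u = g'_u$ and $C_G(g_s) = C_G(g'_s)$, the subgroup $L_s$, and hence $W_s = N_{L_s}(T_0)/T_0$, is literally the same whether it is computed from $g$ or from $g'$. Here I am using that both $g$ and $g'$ are in Jordan normal form, so that the common ambient maximal torus $T_0$ lies in $L_s$ in both cases and the identification of $W_s$ as a subgroup of $W$ is unambiguous. The two sums expressing $|(\Y_{w,g})^F|$ and $|(\Y_{w,g'})^F|$ are therefore term-by-term identical, and the proposition follows.

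The only delicate point to check, though not a real obstacle, is that the value $R^{L_s}_{T_{\tilde\cC}}\, id_{T_{\tilde\cC}}(g_u)$ is unambiguously determined by $L_s$, $\tilde\cC$, and $g_u$. This holds because $R^{L_s}_{T_{\tilde\cC}}\, id_{T_{\tilde\cC}}$ is a class function on $L_s^F$ and $T_{\tilde\cC}$ is specified only up to $L_s^F$-conjugation, so the value at the $L_s^F$-conjugacy class of $g_u$ is independent of the representative torus chosen.
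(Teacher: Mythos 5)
Your proposal is correct and takes essentially the same approach as the paper, which simply observes that the displayed formula for $|(\Y_{w,g})^F|$ depends on $g$ only through $w$, $g_u$, and $L_s = C_G(g_s)$. You supply a bit more detail (the well-definedness of $W_s$ via Jordan normal form, and the class-function argument for the Deligne--Lusztig value), but the underlying reasoning is identical.
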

\begin{proof} Note that the formula above only depends on $w \in W$, $g_u$, and $L_s = C_G(g_s)$.\end{proof}


We recall some facts about Green polynomials and its relation to Deligne-Lusztig characters. It is known that for any unipotent $u \in G^F$ of Jordan type $\lambda \vdash n$ and for any $\rho \vdash n$, we have
\[
R^G_{T_\rho} id_{T_\rho} (u) = Q_{\rho}^{\lambda}(q)
\]
where $Q_{\rho}^{\lambda}$ is the Green polynomial defined in \cite{green}. By \cite[Chapter III.7]{macdonald} it is easy to check that
\begin{equation} \label{eq:green}
\left(R^G_{T_\rho} id_{T_\rho} (u)\right)_{q=1}=Q^\lambda_{\rho}(1) = X_{\rho}^\lambda
\end{equation}
where $X_{\rho}^\lambda$ is as in Definition \ref{def:num}.

 As $\tr(\bT_w, E_q)$ is a polynomial of $q$ and $R_{T_{\tilde{\cC}}}^{L_s} id_{T_{\tilde{\cC}}}(g_u)$ is a product of polynomials of $q$, we see that $|(\Y_{w,g})^F|$ is also a polynomial of $q$. (Recall that $L_s \simeq GL_{\lambda'_1} \times \cdots \times GL_{\lambda'_{r'}}(\k)$, thus $R_{T_{\tilde{\cC}}}^{L_s} id_{T_{\tilde{\cC}}}(g_u)$ is the product of Green polynomials which correspond to each $GL_{\lambda'_i}$ for $1\leq i \leq r'$.) Thus by Lemma \ref{lem:char}, $\chi(\Y_{w,g}) = |(\Y_{w,g})^F|_{q=1}$. Using (\ref{eq:poly1}) we have
\begin{align*}
|(\Y_{w,g})^F|_{q=1} &= \sum_{E \in \Irr(W)}\sum_{\cC \in \conj{W}} \frac{1}{|C_W(\cC)|} \tr(w, E)\tr(\cC, E)\left(R^G_{T_{\cC}}id_{T_\cC}(g)\right)_{q=1}= \left(R^G_{T_{w}}id_{T_w}(g)\right)_{q=1}.
\end{align*}
Here we use the ``column orthogonality" of the character table, i.e. for any $\cC, \cC' \in \conj{W}$ we have
\[\sum_{E \in \Irr(W)} \tr(\cC, E) \tr(\cC', E) = \delta_{\cC, \cC'} \cdot |C_W(\cC)|.\]
Using (\ref{eq:dlchar}) we have
\[ \left(R^G_{T_{w}}id_{T_w}(g)\right)_{q=1} = \sum_{\tilde{\cC} \in \conj{W_s}, \tilde{\cC}\subset \cC_w} \frac{|C_W(\cC_w)|}{|C_{W_s}(\tilde{\cC})|} \left(R_{T_{\tilde{\cC}}}^{L_s} id_{T_{\tilde{\cC}}}(g_u)\right)_{q=1}\]
where $\cC_w \in \conj{W}$ is the conjugacy class containing $w \in W$.

Suppose $\rho \vdash n$ is the cycle type of $w$ and recall that $g_u, g_s$ are of Jordan type $\lambda=(\lambda_1, \cdots, \lambda_{r}), \lambda'=(\lambda'_1, \cdots, \lambda'_{r'}) \vdash n$, respectively. Then there is a bijection 
\[\{\tilde{\cC} \in \conj{W_s} \mid \tilde{\cC} \subset \cC_w\} \rightarrow [P(\rho, \lambda')] : \tilde{\cC} \mapsto \psi_{\tilde{\cC}}\]
which corresponds to the cycle type of $\tilde{\cC}$ in $W_s\simeq S_{\lambda'_1} \times \cdots \times S_{\lambda'_{r'}}$. Then using (\ref{eq:green}) it is easy to show that
\[\frac{|C_W(\cC_w)|}{|C_{W_s}(\tilde{\cC})|}\left(R_{T_{\tilde{\cC}}}^{L_s} id_{T_{\tilde{\cC}}}(g_u)\right)_{q=1} = |\{\zeta \in P(\rho, \lambda) \mid \phi_g\circ[\zeta] = \psi_{\tilde{\cC}}\}|.\]
where $\phi_g \in [P(\lambda, \lambda')]$ is the same as (\ref{eq:jordan}).

\begin{example} Suppose $\rho = (3,2,2,2,1), \lambda=(7,3) \vdash 10, \zeta_1, \cdots, \zeta_4 \in P(\rho, \lambda)$ as in Example \ref{ex1}. First we consider $\lambda'=(10)$ and $\phi_g : 7, 3 \mapsto 10$. Then there is only one possible choice of $\psi_{\tilde{\cC}}$, namely $\psi_{\tilde{\cC}} : 3,2,2,2,1 \mapsto 10$. Therefore, $\zeta_1, \cdots, \zeta_4 \in P(\rho, \lambda)$ are all contained in $\{\zeta \in P(\rho, \lambda) \mid \phi_g\circ[\zeta] = \psi_{\tilde{\cC}}\}$. On the other hand, in that case $\frac{|C_W(\cC_w)|}{|C_{W_s}(\tilde{\cC})|} = 1$ and $\left(R_{T_{\tilde{\cC}}}^{L_s} id_{T_{\tilde{\cC}}}(g_u)\right)_{q=1}=4$.

On the other hand, this time we suppose $\lambda'=(7, 3) \vdash 10$ and $\phi_g: 7 \mapsto 7, 3 \mapsto 3$. Then there are two choice of $\psi_{\tilde{\cC}}$, namely, 
\begin{align*}
\psi_1 &: 3, 2, 2 \mapsto 7, \qquad 2,1 \mapsto 3
\\\psi_2 &: 2, 2, 2,1 \mapsto 7, \qquad 3 \mapsto 3.
\end{align*}
Now $\zeta_1, \zeta_2, \zeta_3$ is contained in $\{\zeta \in P(\rho, \lambda) \mid \phi_g\circ[\zeta] = \psi_{1}\}$ and $\zeta_4$ is contained in $\{\zeta \in P(\rho, \lambda) \mid \phi_g\circ[\zeta] = \psi_{2}\}$. Furthermore, in the first case we have $\frac{|C_W(\cC_w)|}{|C_{W_s}(\tilde{\cC})|} = \frac{3!}{2!1!} = 3$ and $\left(R_{T_{\tilde{\cC}}}^{L_s} id_{T_{\tilde{\cC}}}(g_u)\right)_{q=1}=1$, whereas in the second case $\frac{|C_W(\cC_w)|}{|C_{W_s}(\tilde{\cC})|} = \frac{3!}{3!} = 1$ and $\left(R_{T_{\tilde{\cC}}}^{L_s} id_{T_{\tilde{\cC}}}(g_u)\right)_{q=1}=1$.
\end{example}

It is clear that for any $\zeta \in P(\rho, \lambda)$ there exists a unique $\tilde{\cC} \in \conj{W_s}$ which satisfies $\phi_g\circ[\zeta] = \psi_{\tilde{\cC}}$. Thus
\begin{align*}
\left(R^G_{T_{w}}id_{T_w}(g)\right)_{q=1} &= \sum_{\tilde{\cC} \in \conj{W_s}, \tilde{\cC}\subset \cC_w} |\{\zeta \in P(\rho, \lambda) \mid \phi_g\circ[\zeta] = \psi_{\tilde{\cC}}\}|
\\&=|P(\rho, \lambda)| = X_{\rho}^\lambda.
\end{align*}
Thus Theorem \ref{thm:main} is proved for $\k = \overline{\F_q}$.

\section{Characteristic $p$: Spread-out}
In this section we assume that $\ch \k =p>0$. Here we use spread-out technique which is as follows. We may assume that there exists a subalgebra $A \subset \k$ which satisfies the following property.
\begin{enumerate}
\item $A$ is finitely generated over $\F_p$, i.e. $A = \F_p[a_1, \cdots, a_d] \subset \k$.
\item $g \in G=GL_n(\k)$ is defined over $A$, i.e. it is contained in the image of $GL_n(A) \rightarrow GL_n(\k)$.
\end{enumerate}
Thus we may write $g \in GL_n(A)$. We can also assume that $g$ is of Jordan normal form in $GL_n(A)$. Then there exists $\tilde{\Y}$ defined over $A$ with the following fiber diagram.
\[
\xymatrix{\tilde{\Y} \ar[d]^{f} & \Y_{w,g} \ar[l]_{\tilde{\iota}}\ar[d]^{\tilde{f}} \\ \Spec A & \Spec \k \ar[l]_{\iota}}
\]
By proper base change, we have $\iota^*f_!\qlbar_{\tilde{\Y}} \simeq \tilde{f}_!\tilde{\iota}^*\qlbar_{\tilde{\Y}} = \tilde{f}_!\qlbar_{\Y_{w,g}}$. Thus
\[\chi(\Y_{w,g}) = \sum_{i\in \Z} (-1)^i \dim H^i_c(\Y_{w,g}, \qlbar) = \sum_{i\in \Z} (-1)^i \dim R^i\tilde{f}_!\qlbar_{\Y_{w,g}}= \sum_{i\in \Z} (-1)^i \dim (R^if_!\qlbar_{\tilde{\Y}})_\eta\]
where $\eta \in \Spec A$ is the image of $\iota : \Spec \k \rightarrow \Spec A$ or the generic point of $\Spec A$.
Let $x \in \Spec A$ be a closed point in $\Spec A$. Then its residue field $k(x)$ is a finite extension over $\F_p$, thus is a finite field. Also we have a following fiber diagram.
\[
\xymatrix{\tilde{\Y}_x \ar[r]^{\tilde{\varsigma}} \ar[d]^{\hat{f}}&\tilde{\Y} \ar[d]^{f} & \Y_{w,g} \ar[l]_{\tilde{\iota}}\ar[d]^{\tilde{f}} \\ \Spec k(x)\ar[r]^\varsigma& \Spec A & \Spec \k \ar[l]_{\iota}}
\]
We consider the image of $g = g_s g_u \in GL_n(A)$ in $GL_n(k(x))$, say $\bar{g} = \bar{g}_s\bar{g}_u \in GL_n(k(x))$. Then this again gives the Jordan decomposition of $\bar{g}$ and $\tilde{\Y}_{x}$ is nothing but $\Y_{w, \bar{g}}$ defined over $k(x)$ with respect to $\bar{g} \in GL_n(k(x))$ and $w\in W$.

By the result in the previous section, we know that $\chi(\tilde{\Y}_x) = X_\rho^\lambda$ where $w$ is of cycle type $\rho$ and $\bar{g}_u$ is of Jordan type $\lambda$. As we assume that $g = g_sg_u$ is Jordan normal form, the Jordan type of $g_u$ and $\bar{g}_u$ are the same, thus $g_u$ is also of Jordan type $\lambda$. Again by proper base change, we have $\varsigma^*f_!\qlbar_{\tilde{\Y}} \simeq \hat{f}_!\tilde{\varsigma}^*\qlbar_{\tilde{\Y}} = \hat{f}_!\qlbar_{\tilde{\Y}_x}$. Thus in particular the Euler characteristic of $(f_!\qlbar_{\tilde{\Y}})_x$ is equal to $X_\rho^\lambda$. Since the choice of $x$ is arbitrary, it is true for any closed point in $\Spec A$.

However, $f_!\qlbar_{\tilde{\Y}}$ is a constructible complex of $\ell$-adic sheaves on $\Spec A$. Therefore, the Euler characteristic of $f_!\qlbar_{\tilde{\Y}}$ at the generic point, or $\chi(\Y_{w,g})$, is generically the same as that at closed points, or $X_\rho^\lambda$. Thus Theorem \ref{thm:main} is proved for any $\k$ of characteristic $\neq 0$.

\section{Characteristic 0: Geometric method}
In this section we suppose $\ch \k =0$. Here we use totally different method from above which is based on geometry of $\Y_{w,g}$. First we need a lemma.

\begin{lemma} \label{lem:conj} Suppose $w \in W$ and $s \in S$. Then for any $g \in G$, we have $\chi(\Y_{w, g}) = \chi(\Y_{sws,g})$.
\end{lemma}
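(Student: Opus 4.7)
The plan is to prove Lemma 6.1 by constructing an incidence correspondence in $\B \times \B$ via the minimal parabolic $P_s := B_0 \cup B_0 s B_0$ and the projection $\pi_s : \B \to \B_s := G/P_s$, which is a Zariski-locally trivial $\P^1$-bundle. Conjugation by $g$ preserves the $\sim_s$ relation, so the $\B$-automorphism $B \mapsto \act{g}B$ descends to a morphism $\bar g : \B_s \to \B_s$ compatible with $\pi_s$. The strategy is then to compare $\chi$ via additivity over locally closed stratifications and multiplicativity over fibrations of the $\ell$-adic Euler characteristic.

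Concretely, I would define
$$Z := \{(B_1, B_2) \in \B \times \B \mid \pi_s(B_1) = \pi_s(B_2),\ B_1 \sim_{w'} \act{g}B_2\}$$
for a suitable choice of $w' \in W_s w W_s = \{w, sw, ws, sws\}$ dictated by the relevant length relation. Given $(B_1, B_2) \in Z$, the chain of relative positions
$$B_2 \xrightarrow{\leq s} B_1 \xrightarrow{w'} \act{g}B_2 \xrightarrow{\leq s} \act{g}B_1$$
forces, via Bruhat multiplication, $\delta(B_1, \act{g}B_1)$ and $\delta(B_2, \act{g}B_2)$ to lie in explicit subsets of $W_s w W_s$. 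I would then stratify $Z$ into the locally closed pieces $Z_{x, y} := \{(B_1, B_2) \in Z \mid \delta(B_1, \act{g}B_1) = x,\ \delta(B_2, \act{g}B_2) = y\}$ for $x, y \in W_s w W_s$, and observe that each $Z_{x, y}$ is a fibration over the pullback of $\Y_{x, g} \cap p_1(Z_{x, y})$ (and symmetrically over $\Y_{y, g}$), with fiber an affine space or projective line.

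The crux of the argument is a case analysis on whether $\l(sws)$ equals $\l(w)+2$, $\l(w)$, or $\l(w)-2$. In the generic case $\l(sws) = \l(w) + 2$, taking $w' = sw$ makes the stratum $Z_{w, sws}$ open and dense in $Z$, with both projections realizing it as an $\A^1$-bundle over $\Y_{w, g}$ and $\Y_{sws, g}$ respectively; the remaining ``small'' strata $Z_{w, sw}$, $Z_{ws, sws}$, $Z_{ws, sw}$ contribute equally under $p_1$ and $p_2$ because of the identity $\chi(\A^1) = \chi(\mathrm{pt}) = 1$, so their contributions cancel when one equates $\chi(Z)$ computed two ways. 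The case $\l(sws) = \l(w) - 2$ follows by swapping the roles of $w$ and $sws$, and the middle case $\l(sws) = \l(w)$ admits an even simpler direct isomorphism $\Y_{w,g} \cong \Y_{sws, g}$ realized by an explicit lift of $s$ in $N_G(T_0)$.

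The main obstacle is the combinatorial bookkeeping across these subcases: one must identify, for each length relation, the correct $w'$ for defining $Z$, enumerate the strata $Z_{x, y}$ of $Z$ with nonempty contribution, and verify that each stratum fibers as claimed. Carrying out the Bruhat-multiplication analysis of $BsB \cdot B w' B \cdot BsB$ and tracking how conjugation by $g$ couples the two $\P^1$-fibers of $\pi_s$ above $\pi_s(B_1) = \pi_s(B_2)$ and $\bar g \pi_s(B_1)$ is the technical heart of the proof.
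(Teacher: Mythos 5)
Your approach, via an incidence correspondence $Z \subset \B \times \B$ over the $\P^1$-bundle $\B \to G/P_s$ and a two-sided Euler-characteristic computation, is genuinely different from the paper's. The paper, following \cite[Theorem 1.6]{dl}, reduces the lemma to two elementary facts: (a) for $u,v\in W$ with $\l(u)+\l(v)=\l(uv)=\l(vu)$ there is an isomorphism $\Y_{uv,g}\cong\Y_{vu,g}$ sending $B$ to the unique $B'$ with $B\sim_u B'\sim_v \act{g}B$; and (b) for $\l(sws)=\l(w)-2$, $\Y_{w,g}$ stratifies into two pieces, one fibering over $\Y_{sw,g}$ with fiber $\A^1\setminus\{0\}$ (contributing $0$) and one fibering over $\Y_{sws,g}$ with fiber $\A^1$. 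Your incidence correspondence is essentially an unfolded version of (b); it can be pushed through, but it costs heavier bookkeeping, and as written there is a genuine gap.

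The gap is your middle case $\l(sws)=\l(w)$. Conjugation by a lift $n_s\in N_G(T_0)$ of $s$ does \emph{not} give $\Y_{w,g}\cong\Y_{sws,g}$: since relative position is $G$-equivariant, $B\mapsto\act{n_s}B$ carries $\Y_{w,g}$ isomorphically onto $\Y_{w,\,n_sgn_s^{-1}}$ (because $\act{n_s}(\act{g}B)=\act{n_sgn_s^{-1}}(\act{n_s}B)$), not onto $\Y_{sws,g}$, and these are unrelated unless $n_s$ centralizes $g$. The correct tool here is the cyclic-shift isomorphism (a): when $\l(sws)=\l(w)$, exactly one of $\l(sw),\l(ws)$ equals $\l(w)-1$; if $\l(sw)=\l(w)-1$, then $u=s$, $v=sw$ satisfy the length-additivity hypothesis of (a) with $uv=w$ and $vu=sws$, and (a) gives the isomorphism directly. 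Two lesser issues in your generic case $\l(sws)=\l(w)+2$: with $w'=sw$, Bruhat multiplication constrains the relative position of $B_1$ with $\act{g}B_1$ to lie in $\{sw,sws\}$ and of $B_2$ with $\act{g}B_2$ to lie in $\{w,sw\}$, so the open stratum is $Z_{sws,w}$, not $Z_{w,sws}$ (you would want $w'=ws$ for your indexing to come out as stated); and the assertion that the small strata ``contribute equally under $p_1$ and $p_2$ and cancel'' needs actual justification, since those strata need not fiber over all of the relevant $\Y$-varieties. Those two points are repairable; the middle-case claim is the one that does not survive.
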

\begin{proof} We follow the argument in \cite[Theorem 1.6]{dl}. It suffices to show the following.
\begin{enumerate}[\quad(a)]
\item For $u,v \in W$ such that $\l(u)+\l(v) = \l(uv) = \l(vu)$, $\chi(\Y_{uv, g}) = \chi(\Y_{vu, g})$.
\item For $w \in W$ and $s \in S$ such that $\l(sws) = \l(w) - 2$, $\chi(\Y_{w, g}) = \chi(\Y_{sws, g})$.
\end{enumerate}
In case (a), there is an isomorphism
\[\Y_{uv, g} \simeq \Y_{vu, g} : B \mapsto B'\]
such that $B \sim_u B' \sim_{v} \act{g}B.$ In case (b), there is a stratification of $\Y_{w, g}$ into two pieces, one of which maps to $\Y_{sw,g}$ with each fiber isomorphic to $\A^1 -\{0\}$ and another one maps to $\Y_{sws, g}$ with fiber $\A^1$. Since $\chi(\A^1 -\{0\})=0$ and $\chi(\A^1) = 1$, we use the lemma below to conclude the result.
\end{proof}

\begin{lemma} \label{lem:fib}Let $f:Y\rightarrow X$ be a morphism of varieties over $\k$. Then there is a finite stratification $X = \sqcup_{m \in \Z}X_m$, i.e. $X_m\neq \emptyset$ for finitely many $m\in \Z$, such that for any $x \in X_m$, $f^{-1}(x)$ has Euler characteristic equal to $m$. Furthermore, in this case $\chi(Y) = \sum_{m\in \Z}m\cdot\chi(X_m)$. In particular, if $\chi(f^{-1}(x))$ is constant for any $x \in X$ then $\chi(Y) = \chi(f^{-1}(x))\chi(X)$ for any $x\in X$.
\end{lemma}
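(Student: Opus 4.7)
The plan is to exploit constructibility of $Rf_!\qlbar_Y$ on $X$, combined with the fact that compactly supported Euler characteristic is additive on locally closed stratifications and is multiplicative for lisse $\qlbar$-sheaves (the latter being precisely in the spirit of \cite{laumon:euler}, which is already invoked in Section 2).

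First I would observe that, since $f$ is a morphism of finite type between varieties over $\k$, the complex $Rf_!\qlbar_Y$ is constructible. Hence there exists a finite stratification $X = \sqcup_j Z_j$ into locally closed subvarieties on which every cohomology sheaf $R^if_!\qlbar_Y$ restricts to a lisse $\qlbar$-sheaf; one may further refine so that each $Z_j$ is smooth and irreducible. By proper base change the stalk $(R^if_!\qlbar_Y)_x$ is canonically $H^i_c(f^{-1}(x),\qlbar)$, so the rank $d_{j,i}$ of $R^if_!\qlbar_Y|_{Z_j}$ equals $\dim H^i_c(f^{-1}(x),\qlbar)$ for every $x \in Z_j$. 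Therefore $\chi(f^{-1}(x)) = \sum_i (-1)^i d_{j,i}$ is constant on each $Z_j$; call this value $m_j$. Setting $X_m \colonequals \sqcup_{j : m_j = m} Z_j$ gives the required stratification, with only finitely many $X_m$ nonempty since there are only finitely many strata.

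For the integration formula I would use additivity of $\chi_c$ over the locally closed decomposition:
\[
\chi(Y) = \chi_c(X, Rf_!\qlbar_Y) = \sum_j \chi_c(Z_j, Rf_!\qlbar_Y|_{Z_j}).
\]
Each $Rf_!\qlbar_Y|_{Z_j}$ has lisse cohomology sheaves, and for a lisse $\qlbar$-sheaf $\mathcal{L}$ of rank $r$ on a variety $Z$ one has $\chi_c(Z, \mathcal{L}) = r \cdot \chi(Z)$. Applying this termwise to the cohomology sheaves of $Rf_!\qlbar_Y|_{Z_j}$ gives $\chi_c(Z_j, Rf_!\qlbar_Y|_{Z_j}) = m_j \chi(Z_j)$. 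Summing and regrouping strata by the value of $m_j$ produces $\chi(Y) = \sum_m m \cdot \chi(X_m)$, and the final ``in particular'' clause is then immediate by taking $X_m = X$ for $m = \chi(f^{-1}(x))$.

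The main technical hurdle is justifying the rank formula $\chi_c(Z,\mathcal{L}) = r \cdot \chi(Z)$ in the precise generality required. In characteristic zero it reduces, via the comparison between $\ell$-adic and singular cohomology, to the classical topological statement for local systems on complex varieties. In positive characteristic it is the content of \cite{laumon:euler}. A secondary, more cosmetic issue is that $Y$ and $X$ may be reducible, so one must first refine the stratifications to be compatible with the decomposition into irreducible components; since $X$ admits a finite stratification by smooth irreducible locally closed subvarieties, this causes no difficulty.
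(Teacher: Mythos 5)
Your argument is essentially the same as the paper's: use constructibility of $Rf_!\qlbar_Y$ to stratify $X$ so that the cohomology sheaves are lisse, identify the constant fiber Euler characteristic on each stratum via proper base change, then integrate using additivity of $\chi_c$ over the stratification and the rank formula $\chi_c(Z,\mathcal{L}) = r\cdot\chi(Z)$ for lisse sheaves — which one ultimately justifies, as the paper does, by the comparison theorem with complex topology. You have simply spelled out the intermediate steps that the paper leaves implicit.

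There is, however, one real error in your closing remark: the claim that ``in positive characteristic it [the rank formula $\chi_c(Z,\mathcal{L})=r\cdot\chi(Z)$] is the content of \cite{laumon:euler}'' is false on two counts. First, Laumon's theorem is the equality $\chi_c(Z,\mathcal{F}) = \chi(Z,\mathcal{F})$ of compactly-supported and ordinary Euler characteristics for a constructible complex; it is not a statement that a lisse sheaf of rank $r$ has the Euler characteristic of $r$ copies of the constant sheaf. Second, and more importantly, that rank formula is simply \emph{not true} in positive characteristic: wild ramification contributes Swan-conductor terms (Grothendieck--Ogg--Shafarevich), and the paper's own remark immediately after the lemma illustrates the failure via the Artin--Schreier covering $\A^1 \to \A^1$, $x \mapsto x^p - x$, whose pushforward of the constant sheaf is a rank-$p$ lisse sheaf on $\A^1$ with $\chi_c = 1 \neq p$. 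This does not damage your proof in the setting where the lemma is actually invoked (Section 6 assumes $\ch\k = 0$), but the remark should be removed or corrected, since the lemma is genuinely a characteristic-zero statement.
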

\begin{proof} We have $\chi(Y) = \chi(X, f_!\qlbar_Y)$ where the latter is the Euler characteristic of the complex $f_!\qlbar_Y$ on $X$. Note that $f_!\qlbar_Y$ is a constructible complex of $\ell$-adic sheaves on $X$. Now it follows from the comparison theorem between \'etale and complex topology, and an analogous statement in complex topology.
\end{proof}
\begin{rmk} This is in general not true in characteristic $p\neq 0$. Suppose the Artin-Schreier covering $\A^1 \rightarrow \A^1: x \mapsto x^p -x$. Then every fiber consists of $p$ points, but $1=\chi(\A^1) \neq p\cdot\chi(\A^1)=p$.
\end{rmk}
Thus it suffices to prove Theorem \ref{thm:main} when $w\in W$ has the minimal length in its conjugacy class, which we assume from now on. If $w$ has a full support, i.e. if $w$ is a Coxeter element, we have a simple answer.
\begin{lemma} \label{lem:cox}Suppose $w\in W$ is a Coxeter element. Then $\chi(\Y_{w, g})=1$ if $g_u$ is regular unipotent, and otherwise zero.
\end{lemma}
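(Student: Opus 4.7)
The plan is to identify $\Y_{w,g}$ explicitly as an open subvariety of $\P(V) = \P^{n-1}$, where $V = \k^n$, and then read off the Euler characteristic directly from the Jordan structure of $g$. Since the Coxeter elements of $W = S_n$ form a single conjugacy class and $\chi(\Y_{w,g})$ is conjugacy-invariant in $w$ by Lemma \ref{lem:conj} (combined with the classical fact that any two Coxeter elements in $S_n$ are connected by cyclic shifts in their reduced expressions), I may take $w = s_1 s_2 \cdots s_{n-1}$, the long cycle $k \mapsto k+1 \pmod n$.

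First I would rewrite $F \sim_w \act{g}F$ as a flag-level condition. Using the standard formula $\dim(F_i \cap (\act{g}F)_j) = |\{k \leq j : w(k) \leq i\}|$ for two flags in relative position $w$, the Coxeter case evaluates to $\min(i-1, j)$ for $j < n$. A direct check shows that this is equivalent to the pair of conditions $gF_j \subset F_{j+1}$ and $F_j \neq gF_j$ for each $1 \leq j \leq n-1$. Under these conditions $F_{j+1} = F_j + gF_j$, so if $v$ generates $F_1$ then inductively $F_i = \br{v, gv, \ldots, g^{i-1}v}$; the requirement $F_n = V$ then says $v$ is a \emph{cyclic vector} for $g$. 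Conversely, any cyclic vector $v$ defines such a flag. This yields an isomorphism
\[
\Y_{w,g} \;\xrightarrow{\sim}\; \P(V_g^{\mathrm{cyc}}), \qquad F \mapsto F_1,
\]
where $V_g^{\mathrm{cyc}} \subset V$ is the open set of cyclic vectors for $g$.

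Next I would compute $\chi(\P(V_g^{\mathrm{cyc}}))$ according to the structure of $g$. If $g$ is derogatory (some eigenvalue carries two or more Jordan blocks, equivalently minimal polynomial $\neq$ characteristic polynomial) then $V_g^{\mathrm{cyc}} = \emptyset$ and $\chi = 0$. If $g$ is cyclic, the maximal proper $g$-stable subspaces are exactly $r$ hyperplanes in $V$, one for each distinct eigenvalue of $g$, cut out in a Jordan basis by the vanishing of the ``top'' coordinate of the corresponding Jordan block. Hence $V_g^{\mathrm{cyc}} \cong \k^{n-r} \times (\k^*)^r$, and dividing by the overall scaling action of $\k^*$ gives $\P(V_g^{\mathrm{cyc}}) \cong \k^{n-r} \times (\k^*)^{r-1}$, whose Euler characteristic is $\delta_{r,1}$. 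Finally, $r = 1$ together with cyclicity of $g$ says exactly that $g_s$ is a scalar and $g_u$ is a single Jordan block of size $n$, i.e.\ $g_u$ is regular unipotent.

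The main obstacle is Step 1 --- the flag-level reformulation of the Coxeter relative position. One must carefully exclude configurations in which some $F_j$ is accidentally $g$-stable (these satisfy $gF_j \subset F_{j+1}$ trivially but correspond to the wrong Bruhat cell) and verify that the reduced conditions $gF_j \subset F_{j+1}$, $F_j \neq gF_j$ really pin down the Coxeter relative position. Everything downstream --- the cyclic-vector parametrization, the identification of $V_g^{\mathrm{cyc}}$ with a hyperplane-arrangement complement, and the Euler-characteristic count --- is then a short computation.
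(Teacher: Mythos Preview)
Your argument is correct and takes a genuinely different route from the paper's proof.

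The paper argues as follows: it quotes the fact that $\Y_{w,g}$ is empty unless $g$ is regular, then invokes \cite[5.8]{lu:weyltounip} to say that $\Y_{w,g}$ is a finite union of $C_G(g)/Z_G$-orbits with finite isotropy. When $g_u$ is not regular unipotent, $C_G(g)/Z_G$ contains a nontrivial torus, so each orbit has vanishing Euler characteristic; when $g_u$ is regular unipotent, $g_s$ is central and \cite[0.3]{lu:homogeneity} identifies $\Y_{w,g_u}$ with an affine space. Thus the paper's proof is short but leans on two external structural results of Lusztig that hold for arbitrary reductive groups.

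Your approach is more elementary and entirely self-contained for $GL_n$: after reducing via Lemma~\ref{lem:conj} to $w=s_1\cdots s_{n-1}$, you identify $\Y_{w,g}$ explicitly with the projectivized set of cyclic vectors $\P(V_g^{\mathrm{cyc}})$, and then read off $\chi$ from the complement of the $r$ coordinate hyperplanes determined by the ``top'' Jordan coordinates. The delicate point you flag---that the two conditions $gF_j\subset F_{j+1}$ and $gF_j\neq F_j$ really characterize the Coxeter relative position---is genuine but goes through: these conditions force $F_k=\br{v,gv,\ldots,g^{k-1}v}$ with $v$ cyclic, and for such flags one checks $\dim(F_i\cap gF_j)=\min(i-1,j)$ directly from $F_i+gF_j=F_{\max(i,j+1)}$.

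What each approach buys: the paper's argument transplants verbatim to other types (and indeed the cited results are stated in that generality), which fits the paper's closing remark about expecting similar behaviour beyond type~A. Your argument is specific to $GL_n$ but gives an explicit isomorphism $\Y_{w,g}\cong \k^{n-r}\times(\k^*)^{r-1}$ in the regular case, which is strictly more information than the Euler characteristic alone and avoids any black-box citations.
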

\begin{proof} It is known that $\Y_{w, g} \neq \emptyset$ only if $g$ is regular. Thus we assume $g$ is regular. By \cite[5.8]{lu:weyltounip} $\Y_{w, g}$ consists of finitely many $C_G(g)/Z_G$-orbits with finite isotropy groups, where $Z_G$ is the center of $G$. If $g_u$ is not regular unipotent, then $C_G(g)$ contains a noncentral torus, thus it follows that $\chi(\Y_{w, g})=0$ as $\chi(\A^1-\{0\})=0$. If $g_u$ is regular unipotent then $g_s$ is central, thus $\Y_{w, g}=\Y_{w, g_u}$. Also by \cite[0.3]{lu:homogeneity} $\Y_{w, g_u}$ is an affine space since so is $C_G(g_u)/Z_G$. It follows that $\chi(\Y_{w,g})=1$.
\end{proof}

Since any Coxeter element is of cycle type $\rho=(n)\vdash n$ and $X_{(n)}^\lambda=\delta_{(n),\lambda}$, Theorem \ref{thm:main} is proved in this case. Thus we assume $w\in W$ is minimal but not a Coxeter element. Again by Lemma \ref{lem:conj}, we may assume that $w$ has the following cycle decomposition.
\[w = (1, 2, \cdots, \rho_1)(\rho_1+1, \cdots, \rho_1+\rho_2) \cdots (\rho_1+\cdots+\rho_{s-1}+1, \cdots, n)\]
(Recall that $\rho=(\rho_1, \cdots, \rho_s)\vdash n$ is the cycle type of $w\in W$.) The condition that $w$ is not a Coxeter element implies that $1\leq \rho_1 < n$.

For any flag $\cF = [0=V_0 \subset V_1 \subset \cdots \subset V_{n-1} \subset V_n=\k^n] \in \B$ such that $\dim V_i = i$ for $0\leq i \leq n$, if $\cF \in \Y_{w,g}$ then $\act{g}V_{\rho_1}=V_{\rho_1}$ by the definition of $\Y_{w,g}$. If we let $P_0 \subset G$ be the standard parabolic subgroup containing $B_0$ which corresponds to $\{s_1, \cdots, s_{\rho_1 -1}, s_{\rho_1+1}, \cdots, s_{n-1}\} \subset S$, then $G/P_0 \simeq Gr(\rho_1, \k^n)$ parametrizes $\rho_1$-dimensional subspaces of $\k^n$, which we denote by $\cP$. We have a canonical morphism $\pi : \B \rightarrow \cP$ which corresponds to $G/B_0 \rightarrow G/P_0$. Then the condition above implies that $\pi(\Y_{w,g}) \subset \cP^g$.

%
%
Now we suppose $g=g_sg_u\in G$ is of Jordan normal form and let $g=g_s+g_n$ be the Jordan decomposition as an element in $\textup{Lie}(G) = \mathfrak{gl}_n(\k)$, so that $g_u = g_n+id_{\k^n}$. Also we consider the following 1-parameter subgroup
\[ \gamma : \G_m(\k) \rightarrow G : t \mapsto \textup{diag}(1, t, \cdots, t^{n-1})\]
Where $\G_m$ is an one-dimensional torus. Then $\act{\gamma(t)}g = \act{\gamma(t)}g_s +\act{\gamma(t)}g_n = g_s + tg_n$ is again the Jordan decomposition of $\act{\gamma(t)}g$. Also, for any subspace $V \subset \k^n$, it is easy to see that $g$ stabilizes $V$ if and only if $\act{\gamma(t)}g$ stabilizes $V$. As a result, $\gamma$ defines an action on $\cP^g$ by $t\cdot V \colonequals \act{\gamma(t)^{-1}}V$. 

We wish to apply Lemma \ref{lem:fib} to the restriction of $\pi$ on $\Y_{w, g}$ which we again denote by $\pi$. We claim that every fiber of $\pi: \Y_{w,g} \rightarrow \cP^g$ at each point on a fixed $\G_m(\k)$-orbit on $\cP^g$ is isomorphic to one another. Indeed, suppose $V \in \cP^g$ and let
\[ w_1 = (1, 2, \cdots, \rho_1), \quad w_2 = (\rho_1+1, \cdots, \rho_1+\rho_2) \cdots (\rho_1+\cdots+\rho_{s-1}+1, \cdots, n)\]
such that $w=w_1w_2 = w_2w_1$. Then the fiber of $\pi$ at $V\in \cP^g$ is isomorphic to $\Y_{w_1', g|_{V}} \times \Y_{w_2', g|_{V'}}$ where $V' \colonequals \k^n/V$ and
\begin{align*}
w_1' &= (1, \cdots, \rho_1) \in S_{\rho_1}, 
\\w_2' &= (1, \cdots,\rho_2)(\rho_2+1, \cdots, \rho_2+\rho_3)\cdots (\rho_2+\cdots+\rho_{s-1}+1, \cdots, n-\rho_1)\in S_{n-\rho_1}.
\end{align*}
Likewise on $\act{\gamma(t)^{-1}}V$ the fiber is isomorphic to $\Y_{w_1', g|_{\act{\gamma(t)^{-1}}V}} \times \Y_{w_2', g|_{\act{\gamma(t)^{-1}}V'}}$ where $\act{\gamma(t)^{-1}}V' \colonequals \k^n/\act{\gamma(t)^{-1}}V$. This is again isomorphic to $\Y_{w_1', \act{\gamma(t)}g|_{V}} \times \Y_{w_2', \act{\gamma(t)}g|_{V'}} = \Y_{w_1', (g_s+tg_n)|_{V}} \times \Y_{w_2', (g_s+tg_n)|_{V'}}$ by conjugation.

Note that $g|_V = g_s|_V + g_n|_V$ and $(g_s+tg_n)|_{V} = g_s|_V+tg_n|_V$ are again the Jordan decompositions of $g$ and $(g_s+tg_n)|_{V}$, respectively. Then it is easy to see that $g|_V$ and $(g_s+tg_n)|_{V}$ are conjugate under $GL(V)$. Likewise, $g|_{V'}$ and $(g_s+tg_n)|_{V'}$ are conjugate under $GL(V')$. But this means that $\Y_{w_1', (g_s+tg_n)|_{V}} \times \Y_{w_2', (g_s+tg_n)|_{V'}}$ is isomorphic to $\Y_{w_1', g|_{V}} \times \Y_{w_2', g|_{V'}}$, which is what we want.

Now for $\pi: \Y_{w,g} \rightarrow \cP$, we set $\cP =\sqcup_{m\in \Z}X_m$ as in Lemma \ref{lem:fib}. (Note that $\cP - \cP^g \subset X_0$.) Then the claim above asserts that every $X_m$ is preserved by $\G_m(\k)$-action. Also note that this action on $\cP$ has finite number of fixed points, namely,
\[\cP^{\G_m(\k)}=\{\br{e_{j_1}, \cdots, e_{j_{\rho_1}}} \subset \k^n \mid \{j_1, \cdots, j_{\rho_1}\} \subset \{1, \cdots, n\}\}.\]
Thus by Lemma \ref{lem:fib} and the theorem of Bialynicki-Birula \cite{b-b} we conclude that 
\[\chi(\Y_{w, g}) = \sum_{V \in \cP^{\G_m(\k)}} \chi(\pi^{-1}(V)) =  \sum_{V \in \cP^{\G_m(\k)}, \act{g}V= V}  \chi(\Y_{w_1', g|_{V}}) \cdot \chi(\Y_{w_2', g|_{V'}}) \]
where again $V' = \k^n/V$.

We use Lemma \ref{lem:cox}. Since $w_1' \in S_{\rho_1}$ is a Coxeter element, for any $V \in \cP^{\G_m(\k)}$ we have $\chi(\Y_{w_1', g|_{V}}) =1$ if and only if $g$ stabilizes $V$ and $g_u|_V$ is regular unipotent. Otherwise either $\pi^{-1}(V)$ is empty or $\chi(\Y_{w_1', g|_{V}})=0$, thus $V \in X_0$. Therefore if we let $\mathfrak{V} \colonequals \{ V \in \cP^{\G_m(\k)} \mid \act{g}V = V, g_u|_V \textup{ is regular unipotent}\}$ then we have
\[ \chi(\Y_{w,g}) = \sum_{V \in \mathfrak{V}} \chi(\Y_{w_2', g|_{V'}}).\]
There is an one-to-one correspondence between such $V \in \mathfrak{V}$ and Jordan blocks of $\g_u$ of size $\geq \rho_1$. Then the Jordan type of $g|_{V'}$ is a partition of $n-\rho_1$ obtained by subtracting $\rho_1$ from the part of $\lambda = (\lambda_1, \cdots, \lambda_r) \vdash n$ which corresponds to such Jordan block. We set $\rho'$ to be the cycle type of $w_2'$, which is the same as the partition of $n-\rho_1$ obtained by removing $\rho_1$ from $\rho$. Then by induction on $n$ it is easy to see that
\begin{align*}
 \chi(\Y_{w,g}) &= \sum_{V \in \mathfrak{V}}  \chi(\Y_{w_2',g|_{V'}})=\sum_{i=1}^r |\{ \zeta \in P(\rho, \lambda) \mid \zeta(1) = i\}| = |P(\rho, \lambda)| =X_\rho^\lambda.
 \end{align*}
But this is exactly the statement of Theorem \ref{thm:main}.

%
%
%

\bibliographystyle{amsalphacopy}
\bibliography{euler}

\providecommand{\bysame}{\leavevmode\hbox to3em{\hrulefill}\thinspace}
\providecommand{\MR}{\relax\ifhmode\unskip\space\fi MR }
\providecommand{\MRhref}[2]{%
  \href{http://www.ams.org/mathscinet-getitem?mr=#1}{#2}
}
\providecommand{\href}[2]{#2}
\begin{thebibliography}{Kaw75}

\bibitem[Bia73]{b-b}
Bialynicki{-}{B}irula, A., \emph{On fixed point schemes of actions of
  multiplicative and additive groups}, Topology \textbf{12} (1973), no.~1,
  99--103.

\bibitem[DL76]{dl}
Deligne, P. and Lusztig, G., \emph{Representations of reductive groups over a
  finite field}, Ann. Math \textbf{103} (1976), 103--161.

\bibitem[DM91]{dm:book}
Digne, F. and Michel, J., \emph{Representations of finite groups of {L}ie
  type}, London {M}athematical {S}ociety {S}tudent {T}exts, vol.~21, Cambridge
  {U}niversity {P}ress, 1991.

\bibitem[Fre09]{fresse}
Fresse, L., \emph{Betti numbers of {S}pringer fibers in type {A}}, Journal of
  {A}lgebra \textbf{322} (2009), 2566 -- 2579.

\bibitem[Gre55]{green}
Green, J.~A., \emph{The characters of the finite general linear groups},
  Transactions of the {A}merican {M}athematical {S}ociety (1955), no.~2,
  402--447.

\bibitem[Kaw75]{kawanaka}
Kawanaka, N., \emph{Unipotent elements and characters of finite chevalley
  groups}, Osaka J. Math \textbf{12} (1975), 523--554.

\bibitem[Kim16]{dk:homology}
Kim, D., \emph{Homology class of a {D}eligne-{L}usztig variety and its
  analogues}, Available at \url{http://arxiv.org/abs/1603.09295}, 2016,
  ar{X}iv:1603.09295 [math.{R}{T}].

\bibitem[Lau81]{laumon:euler}
Laumon, G., \emph{Comparaison de caract{\'e}ristiques
  d'{E}uler{-}{P}oincar{\'e} en cohomologie $l$-adique.}, Comptes {R}endus des
  S{\'e}ances de l'{A}cad{\'e}mie des {S}ciences. {S}{\'e}rie {I}.
  {M}ath{\'e}matique \textbf{292} (1981), no.~3, 209--212.

\bibitem[Lus80]{lu:reflection}
Lusztig, G., \emph{On the reflection representation of a finite {C}hevalley
  group}, Representation {T}heory of {L}ie {G}roups, London {M}athematical
  {S}ociety lecture note series, vol.~34, Cambridge {U}niversity {P}ress, 1980,
  pp.~325--337.

\bibitem[Lus84]{lu:orangebook}
\bysame, \emph{Characters of reductive groups over a finite field}, Annals of
  mathematics studies, no. 107, Princeton {U}niversity {P}ress, 1984.

\bibitem[Lus85]{lu:char1}
\bysame, \emph{Character sheaves {I}}, Adv. Math. \textbf{56} (1985), 193--237.

\bibitem[Lus04]{lu:inductionthm}
\bysame, \emph{An induction theorem for {S}pringer's representations},
  Representation theory of algebraic groups and quantum groups, Adv. {S}tud.
  {P}ure {M}ath., no.~40, 2004, pp.~253--259.

\bibitem[Lus10]{lu:certain}
\bysame, \emph{On certain varieties attached to a {W}eyl group element},
  Available at \url{http://arxiv.org/abs/1012.2074}, 2010, ar{X}iv:1012.2074
  [math.{R}{T}].

\bibitem[Lus11]{lu:weyltounip}
\bysame, \emph{From conjugacy classes in the {W}eyl group to unipotent
  classes}, Representation {T}heory({A}n {E}lectronic {J}ournal of the {AMS})
  \textbf{15} (2011), 494--530.

\bibitem[Lus12]{lu:homogeneity}
\bysame, \emph{Elliptic elements in a {W}eyl group: a homogeneity property},
  Representation {T}heory({A}n {E}lectronic {J}ournal of the {AMS}) \textbf{16}
  (2012), 127--151.

\bibitem[Lus14]{lu:distinguished}
\bysame, \emph{Distinguished conjugacy classes and elliptic {W}eyl group
  elements.}, Representation {T}heory \textbf{18} (2014), no.~8, 223 -- 277.

\bibitem[Mac95]{macdonald}
Macdonald, I.~G., \emph{Symmetric functions and {H}all polynomials}, second
  ed., Oxford mathematical monographs, Oxford {U}niversity {P}ress, 1995.

\bibitem[Sta86]{stanley}
Stanley, R.~P., \emph{Enumerative combinatorics}, vol.~2, Cambridge studies in
  advanced mathematics, no.~62, Cambridge {U}niversity {P}ress, 1986.

\end{thebibliography}

\end{document}